\newcommand{\N}{\mathds{N}}
\newcommand{\R}{\mathds{R}}
\newcommand{\C}{\mathds{C}}
\newcommand{\ergaenzung}[1]{#1}
\providecommand{\abs}[1]{\left\lvert#1\right\rvert}
\providecommand{\norm}[1]{\left\lVert#1\right\rVert}
\DeclareMathOperator{\TextRe}{Re}
\renewcommand{\Re}{\TextRe}
\renewcommand{\exp}{\text{e}}
\newtheorem{lemma}{Lemma}[section]
\newtheorem{prop}[lemma]{Proposition}
\newtheorem{thm}[lemma]{Theorem}
\newtheorem{corollary}[lemma]{Corollary}
\theoremstyle{definition}
\newtheorem{defi}[lemma]{Definition}
\newtheorem{remark}[lemma]{Remark}
\begin{document}

\title[Fractional powers and Dirichlet-to-Neumann operator]{Fractional powers of non-negative operators in Banach spaces via the Dirichlet-to-Neumann operator}
\author[J.~Meichsner]{Jan Meichsner}
\author[C.~Seifert]{Christian Seifert}
\address{TU Hamburg \\ Institut f\"ur Mathematik \\
Am Schwarzenberg-Campus~3 \\
Geb\"aude E \\
21073 Hamburg \\
Germany}
\email{christian.seifert@tuhh.de, jan.meichsner@tuhh.de}

\subjclass[2010]{Primary 47A05, Secondary 47D06, 47A60}

\keywords{fractional powers, non-negative operator, Dirichlet-to-Neumann operator}

\date{\today}

\begin{abstract}
We consider fractional powers of non-densely defined non-negative operators in Banach spaces defined by means of the Balakrishnan operator.
Under mild assumptions on the operator we show that the fractional powers can partially be obtained 
by a generalised Dirichlet-to-Neumann operator for a Bessel-type differential equation.
\end{abstract}

\maketitle

\section{Introduction}

Fractional powers of linear operators in Banach spaces were studied since the 1950s \cite{ bochner1949, phillips1952, balakrishnan1959} and a huge step was made when A.~V.~Balakrishnan \cite{balakrishnan1960} extended the work from the negatives of generators of bounded semigroups to the wider class of so called non-negative operators. 
In the context of Banach spaces these are the linear operators having their spectrum contained in a sector with vertex $0$ and fulfilling an additional resolvent estimate. 

In 1968 ideas on how to describe fractional powers of the Laplacian via extensions appeared in the context of stochastic processes (see \cite{molchanow1968}) but in this work focus was not on the fractional powers themselves. 

The approach appeared again 40 years later in the context of PDEs in the celebrated work of Cafarelli and Silvestre \cite{caffarelli2007} where the authors described fractional powers of the Laplacian by means of taking traces of functions solving the PDE
\begin{equation}
\begin{aligned}
 \partial_t^2 u(t,x) + \frac{1-2\alpha}{t} \partial_t u(t,x) & =  - \Delta_x u (t,x) \quad & \big( (t,x) \in (0, \infty) \times \R^n \big), \label{fractional_laplace} \\
 u(0,x) & = f(x) \quad & ( x \in \R^n ),
\end{aligned} 
\end{equation}
with $\alpha \in (0,1)$ being the fractional power. 
One can calculate $\left( - \Delta \right)^{\alpha}$ as
\begin{equation}
 c_{\alpha} \left( \left( - \Delta \right)^{\alpha}f \right)(x) = - \lim\limits_{t \rightarrow 0+} t^{1-2\alpha} \partial_t u(t,x) \qquad ( x\in\R^n ), \label{limitdefinition}
\end{equation}
with a constant $c_{\alpha}$ and a solution $u$ of \eqref{fractional_laplace}.  

Formally one could interpret solutions to  \eqref{fractional_laplace} as harmonic functions defined on $\R^{n}\times\R^{2-2\alpha}$. 
In this case the equation \eqref{fractional_laplace} is nothing but the usual Laplacian applied to a function $v$ of the special form 
\begin{align*}
  v: \R^{n}\times\R^{2-2\alpha} \rightarrow \R, \quad  v(x,y) = u \left(\norm{y},x \right),  
\end{align*}
with a suitable function $ u:\R\times \R^{n} \rightarrow \R$. 
So it just depends on the norm of the additional $2-2\alpha$ coordinates. This is the reason why the technique is called harmonic extension. 

The obvious question arises whether this works if one replaces $-\Delta$ in \eqref{fractional_laplace} by a linear operator $A$ acting in a Banach space $X$. 
In this scenario \eqref{fractional_laplace} becomes 
\begin{align}
 u''(t) + \frac{1 - 2 \alpha}{t} u'(t) & = Au(t) \qquad \big( t \in (0, \infty) \big), \label{fractional_ODE} \\
 u(0) & = x, \label{dirichlet}
\end{align}
i.e., a linear ODE in the Banach space $X$ with initial datum $x \in X$. 
If the considered Banach space $X$ is one-dimensional this is just another form of Bessel's differential equation and a functional calculus based on integral representations of its solutions provides solutions to \eqref{fractional_ODE} as well (see \cite{stinga2010, gale2013, arendt2016}). 
The corresponding integral representation of a solution for the case of $A$ being a non-negative selfadjoint second order elliptic differential operator in $L_2(\Omega,\mu)$, where $\Omega\subseteq \R^n$ is open and $\mu$ is a measure on $\Omega$, is due to Stinga and Torrea \cite{stinga2010}.
There, they also establish uniqueness results on the solution $u$ for the case of $A$ having purely discrete spectrum.

Given a solution $u$ to \eqref{fractional_ODE}, one can define analogously to \eqref{limitdefinition}
\begin{equation}
 T_{\alpha}x := - \lim\limits_{t \rightarrow 0+} t^{1-2\alpha} u'(t)   \label{limit}
\end{equation}
(assuming the limit exists) and ask whether 
\begin{equation}
 T_{\alpha} =  c_{\alpha} A^{\alpha} \label{question}
\end{equation}
still holds. 

In case $\alpha = \tfrac{1}{2}$ the limit \eqref{limit} can be interpreted as a normal derivative of $u$ in the domain $[0, \infty)$. 
Hence, the so obtained operator $T_{1/2}$ maps the Dirichlet boundary condition \eqref{dirichlet} on the Neumann boundary condition \eqref{limit} and is therefore called Dirichlet-to-Neumann operator. 
\ergaenzung{The operator, defined as in \eqref{limit}, will turn out to be closable.
So we shall use the terminology Dirichlet-to-Neumann operator for the closure $\overline{T_\alpha}$ of $T_\alpha$ even for $\alpha \neq \tfrac{1}{2}$.}

In \cite{arendt2016,stinga2010} the authors considered the situation when $X$ is a Hilbert space.
In \cite{stinga2010} the equality in \eqref{question} was shown for $L_2$-spaces as noted above for non-negative selfadjoint $A$. Moreover, the constant $c_\alpha$ was explicitly computed.
On the other hand, in \cite{arendt2016} the authors made use of form techniques to study fractional powers. 
In particular, they proved the well-posedness of the Dirichlet problem \eqref{fractional_ODE} for initial data as in \eqref{dirichlet} and showed that the domain of the Dirichlet-to-Neumann operator is a subspace of a complex interpolation space between $X$ and a dense subspace $V$ of it which contains $\mathcal{D}(A)$. 
The considered operator $A$ has bounded inverse and is m-accretive, i.e., non-negative with $M=1$, see Definition \ref{nnops} for more details.  
Such operators have bounded imaginary powers (\cite[Corollary 7.1.8]{haase2006}). 
By \cite[Theorem 11.5.4]{martinez2001} the domains of the fractional powers of these operators coincide with the complex interpolation spaces between $\mathcal{D}(A)$ and $X$ for real powers $\alpha$.
The question whether these interpolation spaces coincide with the domains of the Dirichlet-to-Neumann operators was not completely clarified.  

In \cite{gale2013} the more general situation of $X$ being a Banach space, $-A$ being the generator of a $\beta$ times integrated semigroup, and $\alpha\in\C$ with $0<\Re\alpha<1$ is treated.
Generators of integrated semigroups generalise the notion of semigroup generators. 
Further the ODE \eqref{fractional_ODE} is discussed on an entire sector in $\C$ rather than just on the half line $(0, \infty)$. 
There, \eqref{question} is proved for  $x \in \mathcal{D}(A)$, so
\[
 T_{\alpha} x = c_{\alpha} A^{\alpha} x  \qquad \big( x \in \mathcal{D}(A) \big).
\]
What is missing though is a discussion of the uniqueness of the used extension, i.e, the solution $u$ used to define \eqref{limit} as it was performed in \cite{arendt2016, stinga2010}. 

Note that in general we have $\mathcal{D}(A) \subseteq \mathcal{D}\left( A^{\alpha} \right)$.  
This is where our contribution will come into play. 
Let $A$ be a in general non-densely defined non-negative operator in a Banach space $X$ such that $-A$ generates a bounded semigroup in $\overline{\mathcal{D}(A)}$.  
For $\alpha \in \C$ with $0 < \Re \alpha < 1$ we define fractional powers $A^{\alpha}$ following \cite{martinez2001} by an operator  $J^{\alpha}_A$ associated to $A$ and $\alpha$, its closure $\overline{J^{\alpha}_A}$ which we will refer to as Balakrishnan operator and a suitable extension of it. 
We define the Dirichlet-to-Neumann operator as closure $\overline{T_{\alpha}}$ of the operator $T_{\alpha}$ defined as suggested in \eqref{limit}. 
It will turn out that this closure coincides up to a constant with the Balakrishan operator $\overline{J^{\alpha}_A}$. This is our main result stated in Theorem \ref{maintheorem}.

The paper is organised as follows. In the next section we introduce the basic notion of non-negative operators and define fractional powers. In Section \ref{mainchapter} we define the Dirichlet-to-Neumann operator and proof our main result. The paper ends with a fourth part where we consider an example, namely a multiplication operator in $C_b$. 

At the end of this section let us quickly fix some notation. 
For the remainder of the paper let $X$ be a Banach space and $\alpha \in \C$ with $0< \Re \alpha <1$.

\section{Non-negative operators and fractional powers}

We denote by $L(X)$ the set of all bounded linear operators from $X$ to $X$ and 
\[
\rho(A) := \left\{ \lambda \in \C \mid \text{$(\lambda - A)$ is injective and }\left( \lambda - A \right)^{-1} \in L(X) \right\}
\]
is the resolvent set of a linear operator $A$ in $X$, while $\sigma(A) := \C \setminus \rho(A)$ denotes its spectrum. 
\begin{defi}[non-negative operator]
Let $A$ a be linear operator in $X$. 
Then $A$ is called \emph{non-negative} if $(- \infty, 0) \subseteq \rho\left(A\right)$ and
\[
 M := \sup\limits_{\lambda > 0} \norm{ \lambda \left( \lambda + A \right)^{-1}} < \infty.
\]
\label{nnops}
\end{defi}
We would like to point out that we do not require $0 \in \rho(A)$. 
Non-negative operators with this additional property are usually called positive. 
One can show $M \in [1, \infty)$ (\cite[Corollary 1.1.4]{martinez2001}).
 
For $z \in \C \setminus (-\infty, 0]$ we define $\arg z$ to be the unique number in the interval $(-\pi, \pi)$ such that $z = \abs{z} \exp^{i \arg z}$. 

\begin{defi}[sectorial operator]
Let $A$ be a linear operator in $X$. Then $A$ is called \emph{sectorial} if 
\[
 \exists \theta \in [0, \pi): \sigma(A) \subseteq S_{\theta}:= \left\{ z \in \C \setminus (-\infty, 0]  \mid \abs{\arg z} \leq \theta \right\} \cup \{0\} 
\]
and
\[
 \forall \phi \in (\theta, \pi): \widetilde{M}_{\phi} := \sup\limits_{z \in \C \setminus S_\phi} \norm{z(z-A)^{-1}} < \infty. 
\]
\end{defi}
The minimum of all angles $\theta$ such that $A$ is sectorial in the sense of the above definition is called its \emph{angle of sectoriality} and denoted by $\Theta$. 

By defnition sectoriality implies non-negativity. 
The converse holds true as well, since $ \Theta \leq \pi - \arcsin \left( \tfrac{1}{M} \right)$ (see \cite[Proposition 1.2.1]{martinez2001}).

Let us now come to the Balakrishnan operator and the definition of the fractional powers of $A$. 
Set
\begin{equation}
  \C \setminus (-\infty, 0] \ni z \mapsto z^{\alpha} := \exp ^{\alpha \ln \abs{z} + i \alpha \arg z}. \label{defpower}
\end{equation}
Let $A$ be a non-negative operator in $X$. For $x \in \mathcal{D}(A)$ we define
\begin{equation}
  J^{\alpha}_A x := \frac{\sin\left( \alpha \pi \right)}{\pi} \int\limits_{0}^{\infty} t^{\alpha - 1} \left(  t  + A \right)^{-1} Ax \, dt. \label{bala_integral}
\end{equation}
The integral is convergent in the Bochner sense since
\[
 \int\limits_0^{\infty} \norm{t^{\alpha - 1} \left( A + t \right)^{-1} Ax} \, dt \leq  \int\limits_0^{1} t^{\Re \alpha - 1} \left( M+1 \right) \norm{x} \, dt +  \int\limits_1^{\infty} t^{\Re \alpha - 1} \frac{M}{t} \norm{Ax} \, dt
\]
with $M$ as in Defnition \ref{nnops}. 
\begin{remark}
  The definition of $J_A^\alpha$ can be extended to the more general case $\Re \alpha >0$ but in this paper we shall not be interested in this. 
\end{remark}
 
\begin{remark}
  The basic idea behind this formula is the identity
  \begin{equation}
  \label{ideabala}
    z^{\alpha} = \frac{\sin\left( \alpha \pi \right)}{\pi} \int\limits_{0}^{\infty} t^{\alpha - 1} \frac{z}{t + z} \, dt.
  \end{equation}
  A possible proof for this identity uses a keyhole contour, several limit processes and the fact that the function $t \mapsto t^{\alpha - 1}$ can be extended holomorphically. 
  In contrast to definition \eqref{defpower}, in this situation the domain must be sliced along the positive real axis, i.e., for the integrand $\arg z$ has to be determined in the interval $(0, 2\pi)$ in order to get a holomorphic extension of $t \mapsto t^{\alpha -1}$ inside the chosen contour. 
  Nevertheless, the expression $z^{\alpha}$ is defined as in \eqref{defpower} with $\arg z$ being determined in $(- \pi, \pi)$. 
  In order to make use this formula one interprets $z$ as an operator acting on the Banach space $X = \C$ and extends it by using more general operators; thus arriving at \eqref{bala_integral}.
  
  Using a keyhole contour also establishes the connection between fractional powers defined by means of \eqref{bala_integral} and a holomorphic functional calculus, see Remark \ref{remark:haase_lunardi}. 
 
\end{remark}

The linear operator $J^{\alpha}_A$ is bounded provided $A$ is, it is injective if $A$ is, and it is closable (\cite[Theorem 3.1.8]{martinez2001}). 

\begin{defi}[Balakrishnan operator]
Let $A$ be a non-negative operator in $X$. 
The closure $\overline{J^{\alpha}_A}$ will be called \emph{Balakrishnan operator} with power $\alpha$ and base $A$. 
\end{defi}

The Balakrishnan operator was introduced in \cite{balakrishnan1960} and it is almost the right candidate for the fractional power $A^{\alpha}$.
(A spectral mapping theorem for $\overline{J^{\alpha}_A}$ does not hold in general though, see \cite[Theorem 5.3.1]{martinez2001}.) 
We now define fractional powers of $A$ with the help of $J^{\alpha}_A$, \ergaenzung{see}  \cite{martinez2001}.  

\begin{defi}[fractional powers]
	Let $A$ be a non-negative operator in $X$. We define the \emph{fractional power $A^{\alpha}$} as 
	\begin{itemize}
		\item[i)] $A^{\alpha} := J^{\alpha}_A$ for $A \in L(X)$ 
		\item[ii)] $A^{\alpha} := \left( J^{\alpha}_{A^{-1}} \right)^{-1}$ for $A$ being unbounded and $0 \in \rho(A)$
		\item[iii)] $A^{\alpha}x := \lim\limits_{\varepsilon \rightarrow 0+} \left( A + \varepsilon \right)^{\alpha}x$ for $A$ being unbounded, $0 \in \sigma(A)$ and $\mathcal{D} \left( A^{\alpha} \right)$ given by 
	\begin{align*}
	\bigl\{ x \in \overline{\mathcal{D}(A)} \mid \exists \, \varepsilon_0 > 0 \, \forall \, 0 < \varepsilon < \varepsilon_0: x \in \mathcal{D}\big( \left(A + \varepsilon \right)^{\alpha} \big),\, \lim\limits_{\varepsilon \rightarrow 0+} \left( A + \varepsilon \right)^{\alpha}x \text{ exists}  \bigr\}.
	\end{align*}
	\end{itemize}
\end{defi}

This yields a well-defined closed linear operator which extends the Balakrishnan operator. 


\begin{lemma}
	Let $A$ be a non-negative operator in $X$ and $x \in X$. Then
	\[
	x \in \overline{\mathcal{D}(A)} \Leftrightarrow \lim\limits_{n \rightarrow \infty} n (n + A)^{-1}x = x. 
	\]	
	\label{convergence_D}
\end{lemma}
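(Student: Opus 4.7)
The plan is to prove both implications with the standard $\varepsilon/3$ argument, exploiting the non-negativity bound $\sup_{n>0}\norm{n(n+A)^{-1}}\leq M$ from Definition~\ref{nnops}.

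For the direction ``$\Leftarrow$'', I would observe that $(n+A)^{-1}$ maps $X$ into $\mathcal{D}(A)$, hence $n(n+A)^{-1}x \in \mathcal{D}(A)$ for every $n\in\N$. If this sequence converges to $x$, then $x$ is a limit of elements of $\mathcal{D}(A)$, so $x\in\overline{\mathcal{D}(A)}$. This is essentially a one-line argument.

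For the direction ``$\Rightarrow$'', I would first treat $y\in\mathcal{D}(A)$ directly. The algebraic identity
\[
  n(n+A)^{-1}y - y = -(n+A)^{-1}Ay
\]
(obtained from $n(n+A)^{-1} = I - A(n+A)^{-1}$ on $\mathcal{D}(A)$) together with $\norm{(n+A)^{-1}} \leq M/n$ gives $\norm{n(n+A)^{-1}y - y} \leq \tfrac{M}{n}\norm{Ay} \to 0$. For general $x\in\overline{\mathcal{D}(A)}$ and $\varepsilon>0$, pick $y\in\mathcal{D}(A)$ with $\norm{x-y}$ small. Then splitting
\[
  n(n+A)^{-1}x - x = n(n+A)^{-1}(x-y) + \bigl(n(n+A)^{-1}y - y\bigr) + (y-x)
\]
and using the uniform bound $\norm{n(n+A)^{-1}}\leq M$ on the first summand yields $\limsup_n \norm{n(n+A)^{-1}x - x} \leq (M+1)\norm{x-y}$, which can be made arbitrarily small.

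There is no real obstacle here; the only point requiring mild care is keeping track of the fact that the resolvent identity $n(n+A)^{-1}=I-A(n+A)^{-1}$ holds on all of $X$ (so that $n(n+A)^{-1}x - x = -A(n+A)^{-1}x$ for any $x$), which automatically places the approximants in $\mathcal{D}(A)$ and makes the backward direction essentially tautological once the forward direction's computation is recognised as valid on $\mathcal{D}(A)$.
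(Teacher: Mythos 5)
Your proof is correct and follows essentially the same route as the paper's: the backward direction via $n(n+A)^{-1}x\in\mathcal{D}(A)$, and the forward direction via the identity $n(n+A)^{-1}x-x=-(n+A)^{-1}Ax$ on $\mathcal{D}(A)$ followed by a density argument using the uniform bound $\norm{n(n+A)^{-1}}\leq M$. The only cosmetic difference is that you spell out the $\varepsilon/3$ splitting where the paper simply cites Banach--Steinhaus.
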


\begin{proof}
	The direction $\Leftarrow$ follows from the fact that $(n (n + A)^{-1}x)_n$ is in $\mathcal{D}(A)$. 
	
	Conversely, consider first $x \in \mathcal{D}(A)$. Then
	\[
	\norm{x - n(n+A)^{-1}x} = \norm{\left( n + A \right)^{-1} Ax} \leq \frac{M \norm{Ax}}{n} \to 0,
	\]	
	so the statement it true for $x \in \mathcal{D}(A)$. Since the sequence $\left( n(n+A)^{-1} \right)_{n \in \N}$ is bounded in $L(X)$ the convergence holds for $x\in \overline{\mathcal{D}(A)}$ by the Banach–Steinhaus Theorem.
\end{proof}

\begin{remark}
	The above lemma can be generalised. 
	Namely \cite[Theorem 6.1.1]{martinez2001}, we have $ \overline{\mathcal{D}(A)} = \overline{\mathcal{D}\left( A^{\alpha} \right)}$ and  for all $x \in X$ 
	\[
	x \in \overline{\mathcal{D}(A)} \Leftrightarrow \lim\limits_{n \rightarrow \infty} \left( n\left(n + A \right)^{-1} \right)^{\alpha}x = x. 
	\]
\end{remark}	

For a linear operator $A$ in $X$ and a closed subspace $D\subseteq X$ we denote by $A_D$ the \emph{part of $A$ in $D$}, i.e., ${\mathcal{D}\left( A_D \right) := \left\{ x \in \mathcal{D}(A)\cap D \mid Ax \in D \right\}}$ and $A_D x :=  Ax$. 
The next proposition clarifies the relationship between $A^{\alpha}$ and $\overline{J^{\alpha}_A}$. 

\begin{prop}
 Let $A$ be a non-negative operator in $X$, $D:=\overline{\mathcal{D}(A)}$. 
 Then
 \begin{itemize}
  \item[i)]  $ \overline{J^{\alpha}_A} = \left( A^{\alpha} \right)_D$, 
  \item[ii)] $\overline{J^{\alpha}_A} = A^{\alpha}$ if and only if $D = X$, that is if and only if $A$ is densely defined, 
  \item[iii)] $\overline{J^{\alpha}_{A_D}} =  \overline{J^{\alpha}_A}$, and therefore $(A_D)^\alpha = \bigl(A^\alpha)_D$. 
 \end{itemize}
\label{bala_power}
\end{prop}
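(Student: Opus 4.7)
The plan is to prove the three parts in the order (i), (iii), (ii): part (iii) drops out of applying (i) twice, while (ii) reduces to (i) plus an analysis of when $A^\alpha$ fails to map its domain into $D$. The workhorse throughout is the approximation $x_n := n(n+A)^{-1}x$, which lies in $\mathcal{D}(A)$ and, by Lemma \ref{convergence_D}, converges to $x$ whenever $x \in D$.

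For part (i), I first establish $\overline{J^{\alpha}_A} \subseteq (A^{\alpha})_D$: the integrand $t^{\alpha-1}(t+A)^{-1}Ax$ of \eqref{bala_integral} lies in $\mathcal{D}(A) \subseteq D$, so by closedness of $D$ the Bochner integral $J^{\alpha}_A x$ lies in $D$, and this property is preserved by taking the closure. Since $A^{\alpha}$ extends $\overline{J^{\alpha}_A}$ by construction, the inclusion follows. For the opposite direction, pick $x \in \mathcal{D}((A^{\alpha})_D)$, i.e.\ $x, A^{\alpha}x \in D$, and form $x_n := n(n+A)^{-1}x \in \mathcal{D}(A)$. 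Then $x_n \to x$ by Lemma \ref{convergence_D}. Using the standard fact that the resolvent commutes with $A^{\alpha}$ on $\mathcal{D}(A^{\alpha})$—which one verifies on $\mathcal{D}(A)$ directly from \eqref{bala_integral} and extends by closedness (and, in case iii) of the fractional-power definition, through the approximation by $A+\varepsilon$)—we get $A^{\alpha}x_n = n(n+A)^{-1} A^{\alpha}x$, and a second application of Lemma \ref{convergence_D} to $A^{\alpha}x \in D$ yields $A^{\alpha}x_n \to A^{\alpha}x$. Since $x_n \in \mathcal{D}(A)$ gives $A^{\alpha}x_n = J^{\alpha}_A x_n$, the pair $(x_n, J^{\alpha}_A x_n) \to (x, A^{\alpha}x)$ witnesses $x \in \mathcal{D}(\overline{J^{\alpha}_A})$.

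For part (iii), the first claim $\overline{J^{\alpha}_{A_D}} = \overline{J^{\alpha}_A}$ is proved by the same approximation trick. The inclusion $\overline{J^{\alpha}_{A_D}} \subseteq \overline{J^{\alpha}_A}$ is trivial since $A_D \subseteq A$ makes $J^{\alpha}_{A_D}$ a restriction of $J^{\alpha}_A$; for the reverse, given $x \in \mathcal{D}(A)$ the approximants $x_n = n(n+A)^{-1}x$ in fact satisfy $Ax_n = n(n+A)^{-1}Ax \in \mathcal{D}(A) \subseteq D$, so $x_n \in \mathcal{D}(A_D)$, and the convergence argument from (i) identifies the limit. The equality $(A_D)^{\alpha} = (A^{\alpha})_D$ then follows by applying (i) twice: once to $A$ to obtain $\overline{J^{\alpha}_A} = (A^{\alpha})_D$, and once to $A_D$ viewed as an operator in the Banach space $D$, where under the standing assumption (that $-A|_D$ generates a bounded semigroup) $A_D$ is densely defined, so the part-in-$D$ operation is vacuous and (i) collapses to $(A_D)^{\alpha} = \overline{J^{\alpha}_{A_D}}$. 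Concatenating these with the first part of (iii) closes the chain.

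For part (ii), the forward direction is immediate from (i), since $D = X$ makes the part-in-$D$ operation vacuous. For the converse, assuming $\overline{J^{\alpha}_A} = A^{\alpha}$ and rewriting this via (i) as $(A^{\alpha})_D = A^{\alpha}$, the key is that $A^{\alpha}\mathcal{D}(A^{\alpha}) \subseteq D$. To rule out $D \neq X$, I would exhibit $y \in X \setminus D$ in the range of $A^{\alpha}$: in case ii) of the fractional-power definition ($0 \in \rho(A)$) this is immediate because $A^{\alpha} = (J^{\alpha}_{A^{-1}})^{-1}$ has range equal to $\mathcal{D}(J^{\alpha}_{A^{-1}}) = X$; in case iii) ($0 \in \sigma(A)$) one reduces to case ii) by considering the shifts $A + \varepsilon$ for $\varepsilon > 0$ (to which case ii) applies) and transporting the range information back to $A^{\alpha}$ through the defining limit $A^{\alpha}x = \lim_{\varepsilon \to 0+}(A+\varepsilon)^{\alpha}x$. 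I expect this last reduction, i.e.\ controlling the limit process carefully enough to produce an element of $\mathcal{D}(A^{\alpha})$ whose $A^{\alpha}$-image escapes $D$, to be the main technical difficulty of the proof.
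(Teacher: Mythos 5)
Your part (iii) is essentially the paper's own argument (the paper proves only (iii) itself and cites \cite[Corollary 5.1.12]{martinez2001} for (i) and (ii)): the regularisation $n(n+A)^{-1}$, the observation that it lands in $\mathcal{D}(A_D)$, and the commutation with $J^\alpha_A$ are exactly the ingredients used there. Two small corrections to that part. First, you justify density of $\mathcal{D}(A_D)$ in $D$ by "the standing assumption that $-A|_D$ generates a bounded semigroup" — but this proposition sits in Section 2, where no such assumption is in force, and the proposition is stated for an arbitrary non-negative $A$. Fortunately the assumption is not needed: $\mathcal{D}(A_D)\supseteq\mathcal{D}(A^2)$, and $n(n+A)^{-1}x\to x$ for $x\in\mathcal{D}(A)$ shows $\overline{\mathcal{D}(A^2)}\supseteq\mathcal{D}(A)$, so $\overline{\mathcal{D}(A_D)}=D$ from non-negativity alone. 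Second, to apply (i)–(ii) to $A_D$ in the space $D$ you should note explicitly that $A_D$ is non-negative in $D$ (the resolvent of $A$ maps $D$ into $\mathcal{D}(A)\subseteq D$ and restricts to the resolvent of $A_D$ with the same bound).

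The genuine gap is in the converse direction of (ii) when $0\in\sigma(A)$, and you flag it yourself. Your strategy requires producing $x\in\mathcal{D}(A^\alpha)$ with $A^\alpha x\notin D$ whenever $D\neq X$; in the case $0\in\rho(A)$ this falls out of $\mathcal{R}(A^\alpha)=\mathcal{D}(J^\alpha_{A^{-1}})=X$, which is correct, but in the case $0\in\sigma(A)$ you only say one "transports the range information back through the defining limit." That transport is precisely the hard part: surjectivity of $(A+\varepsilon)^\alpha$ for each $\varepsilon>0$ gives you preimages $x_\varepsilon$ of a fixed $y\in X\setminus D$, but nothing forces $(x_\varepsilon)$ to converge, nor its limit (if any) to lie in $\mathcal{D}(A^\alpha)$ with image $y$; note also that in case iii) of the definition one has $\mathcal{D}(A^\alpha)\subseteq D$ automatically, so the \emph{only} possible obstruction to $\overline{J^\alpha_A}=A^\alpha$ is the range escaping $D$, which is exactly what remains unproved. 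So the converse of (ii) is asserted, not proved. Your part (i) is acceptable as a self-contained argument, though the commutation $A^\alpha(n+A)^{-1}\supseteq(n+A)^{-1}A^\alpha$ for the \emph{full} fractional power (not just $\overline{J^\alpha_A}$) needs the case distinction spelled out: closedness only extends the commutation to the closure of $J^\alpha_A$, and case ii) of the definition requires the separate observation that $(n+A)^{-1}$ commutes with the bounded operator $J^\alpha_{A^{-1}}$ and hence with its inverse.
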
 

\begin{proof}
 The proofs for i) and ii) can be found in \cite[Corollary 5.1.12]{martinez2001}. 
 As for the last part we first note that $J^{\alpha}_{A_D} = J^{\alpha}_{A}|_{\mathcal{D}(A_D)}$. Hence $\overline{J^{\alpha}_{A_D}} \subseteq  \overline{J^{\alpha}_A}$. 
 Conversely, for $x \in \mathcal{D} \left(\overline{J^{\alpha}_A} \right)$ choose a sequence $(x_n)$ in $\mathcal{D}(A)$ such that $x_n \rightarrow x$ and $J^{\alpha}_A x_n \rightarrow \overline{J^{\alpha}_A} x$.
 For $n\in\N$ define $y_n := n \left( n + A \right)^{-1} x_n$. Then $(y_n)$ in $\mathcal{D} \left( A^2 \right) \subseteq \mathcal{D}(A_D)$, $y_n \rightarrow x$ and 
 \[
 J^{\alpha}_{A_D} y_n = n \left( n + A \right)^{-1} J^{\alpha}_A x_n \rightarrow \overline{J^{\alpha}_A} x. 
 \] 
 Therefore also $\overline{J^{\alpha}_A} \subseteq \overline{J^{\alpha}_{A_D}}$. The last assertion then follows from i) and ii).
\end{proof}

\begin{remark}
  Besides their use of defining fractional powers as above sectorial operators $A$ can also be used to define linear operators $g(A)$ for suitable holomorphic functions $g$. 
  The function $f$ defined by $f(z) := z^{\alpha}$ is such a function and one can define $f(A)$. 
  By \cite[Proposition 6.2.2]{martinez2001} and \cite[Propositions 3.1.1 and 3.1.12]{haase2006} one has $A^{\alpha} = f(A)$. 

  Furthermore, sectorial operators are generators of analytic semigroups $\left( \exp^{-tA} \right)_{t \geq 0}$ in case $\Theta < \tfrac{\pi}{2}$, which are even strongly continuous if we consider $\left( \left( \exp^{-tA} \right)_D \right)$  
  (see \cite[Proposition 2.1.1 and 2.1.4]{lunardi1995} for details). \label{remark:haase_lunardi}
\end{remark}
  
Let $A$ be a non-negative operator in $X$. As mentioned in the introduction several authors already established 
\[
 c_{\alpha} A^{\alpha} x = \lim\limits_{t \rightarrow 0+} - t^{1-2\alpha}u'(t)
\]
with a solution $u$ of \eqref{fractional_ODE}  with initial datum $x \in \mathcal{D}(A)$. 
Assuming that $u'$ is continuous we have $u'(t) \in \overline{\mathcal{D}(A)}$ for all $t \in (0, \infty)$ and therefore the best we can hope for is
\[
 c_{\alpha} \overline{J^{\alpha}_A}x = \lim\limits_{t \rightarrow 0+} - t^{1-2\alpha}u'(t), \quad u(0) = x \in \mathcal{D}\left( \overline{J^{\alpha}_A} \right), 
\]
since in general $A^{\alpha}x \notin \overline{\mathcal{D}(A)}$.  
	
\section{Dirichlet-to-Neumann operator}
\label{mainchapter}

Throughout this section $A$ is assumed to be a non-negative operator in $X$, $D:=\overline{\mathcal{D}(A)}$, such that $-A_D$ generates a bounded $C_0$-semigroup $\left( T(t) \right)_{t \geq 0}$ in $D$, and let $M:=\sup_{t>0}\norm{T(t)}$. 
Necessarily this means $\Theta \leq \tfrac{\pi}{2}$ while $\Theta < \tfrac{\pi}{2}$ is sufficient. 

We consider the initial value problem 
\begin{equation}
 \begin{split}
  u''(t) + \frac{1 - 2 \alpha}{t} u'(t) & = Au(t) \qquad \big( t \in (0, \infty) \big),  \\
  u(0) & = x, 
 \end{split} \label{ODE}
\end{equation}
with initial datum $x \in D$ be given. 
A function $u$ is considered to be a solution of \eqref{ODE} if ${u \in C_b \bigl( [0, \infty); D \bigr) \cap C^2 \bigl( (0, \infty); D \bigr)}$ such that $u(t) \in \mathcal{D}(A)$ for $t > 0$ and \eqref{ODE} is satisfied.   

\begin{defi}
 For $x \in D$ and $t \in [0, \infty)$ we define  
 \begin{equation}
  U(t)x := \begin{cases} \frac{1}{\Gamma(\alpha)} \left( \frac{t}{2} \right)^{2\alpha} \int\limits_0^\infty r^{-\alpha} \exp^{-\frac{t^2}{4r}} T(r) x \, \frac{dr}{r} & \text{if } t > 0, \\ 
                         x & \text{if } t = 0.
           \end{cases} \label{defi_U}
 \end{equation}
\end{defi}
This definition is \ergaenzung{originally due to} \cite{stinga2010} (for the $L_2$-case), see also \cite{gale2013} for Banach spaces and \cite{arendt2016} for general Hilbert spaces. 
\ergaenzung{Note that we intend to emphasise the interpretation of $U$ as an operator-valued mapping as also performed in \cite{arendt2016}}. 

\ergaenzung{Let us show that $U(\cdot)x$ yields a solution of \eqref{ODE}. 
Although this is contained in \cite[Theorem 2.1]{gale2013} (as well as \cite[Theorem 1.1]{stinga2010} for the $L_2$-case), we will give full proofs for the readers convenience.}

\begin{lemma}
 Let $x \in D$, $u(\cdot):=U(\cdot)x$. Then $ u \in C_b\bigl( [0, \infty); D \bigr) \cap C^{\infty}\bigl( (0, \infty); D \bigr)$. \label{lemma_properties} 
\end{lemma}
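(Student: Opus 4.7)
My first move would be to clean up the representation of $U(t)x$ by the substitution $s = t^2/(4r)$, which converts the defining integral into
\begin{equation*}
 U(t)x = \frac{1}{\Gamma(\alpha)} \int_0^\infty s^{\alpha - 1} \exp^{-s} T\!\left(\tfrac{t^2}{4s}\right)\! x \, ds.
\end{equation*}
This form is far more convenient for studying the behaviour at $t=0$ and for boundedness, while the original representation from \eqref{defi_U} is better suited for differentiation in $t$. Boundedness is immediate from $\norm{T(r)}\le M$ together with $\int_0^\infty s^{\Re\alpha-1}\exp^{-s}\,ds = \Gamma(\Re\alpha)$; hence $\sup_{t\ge 0}\norm{U(t)x} \le M\Gamma(\Re\alpha)\norm{x}/\abs{\Gamma(\alpha)}$. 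That each $U(t)x$ lies in $D$ follows since $T(r)x\in D$ for every $r\ge 0$, $D$ is closed, and Bochner integrals of $D$-valued integrands remain in $D$.

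Next I would handle continuity. For $t>0$, a standard dominated-convergence argument applied to the substituted integral (the dominating function is $C\,s^{\Re\alpha-1}\exp^{-s}$ on any compact subinterval of $(0,\infty)$) together with the strong continuity of $T$ gives continuity at $t$. At $t=0$, the same integrand converges pointwise to $s^{\alpha-1}\exp^{-s}x$ because $t^2/(4s)\to 0$ and $T(\cdot)x$ is strongly continuous at $0$ with $T(0)x=x$; dominated convergence then yields $U(t)x\to x$ as $t\to 0^+$, matching the definition of $U(0)x$.

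For the $C^\infty$ regularity on $(0,\infty)$ I would work with the original representation \eqref{defi_U} and differentiate under the integral in $t$. The factor depending on $t$ is $\phi(t,r):=(t/2)^{2\alpha}\exp^{-t^2/(4r)}$, and an easy induction shows that $\partial_t^n\phi(t,r)$ is a finite sum of terms of the form $c_{j,k}\,t^{2\alpha-n+j}\,r^{-k}\exp^{-t^2/(4r)}$ with nonnegative integers $j,k$. To legitimise differentiation on any compact interval $[a,b]\subset(0,\infty)$, I would produce an $r$-integrable majorant independent of $t\in[a,b]$: the key observation is that the Gaussian-type factor $\exp^{-t^2/(4r)}\le \exp^{-a^2/(4r)}$ decays faster than any power of $r$ as $r\to 0^+$, neutralising the singular factor $r^{-\alpha-1-k}$ produced after combining with the measure $dr/r$, while for $r\to\infty$ the bound $\exp^{-t^2/(4r)}\le 1$ together with $r^{-\Re\alpha-1-k}$ (integrable at infinity since $\Re\alpha>0$) and $\norm{T(r)x}\le M\norm{x}$ gives convergence. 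Iterating, each derivative of $U(\cdot)x$ is again a Bochner integral of a $D$-valued function, so $u\in C^\infty\bigl((0,\infty);D\bigr)$.

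\textbf{Main obstacle.} The technical heart of the argument is the smoothness proof: one has to keep careful track, term by term, of what differentiation does to $\phi(t,r)$ and then verify that after combining with $r^{-\alpha-1}T(r)x$ the resulting integrands are still uniformly dominated by an $L^1$-function on compact $t$-intervals. The balance between the Gaussian decay at $r=0$ and the polynomial decay at $r=\infty$ is delicate, but once one sees that $\exp^{-a^2/(4r)}$ absorbs every negative power of $r$ near zero, the rest is bookkeeping and does not require any analyticity of the semigroup.
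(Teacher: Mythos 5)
Your argument is correct and follows essentially the same route as the paper's proof: the substitution $s=t^2/(4r)$ combined with dominated convergence and strong continuity of the semigroup for boundedness and continuity on $[0,\infty)$ (including $t=0$), and differentiation under the integral sign in the original representation, justified by integrable majorants on compact subintervals of $(0,\infty)$, for the $C^\infty$ regularity. Your additional bookkeeping on the structure of $\partial_t^n\phi(t,r)$ and the remark that $U(t)x\in D$ because $D$ is closed under Bochner integration are just explicit versions of steps the paper leaves implicit.
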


\begin{proof}
 Because of 
 \[
  \int\limits_0^{\infty} \norm{r^{-\alpha} \exp^{-\frac{t^2}{4r}} T(r) x} \, \frac{dr}{r} \leq M \norm{x} \int\limits_0^{\infty} r^{-\Re \alpha} \exp^{-\frac{t^2}{4r}} \, \frac{dr}{r} < \infty, 
 \]
 the mapping $u$ is properly defined for $t > 0$. 
 Substituting $s := \tfrac{t^2}{4r}$ yields
 \[
  u(t) = \frac{1}{\Gamma(\alpha)} \int\limits_0^{\infty} s^{\alpha} \exp^{-s} T \left( \tfrac{t^2}{4s} \right) x \, \frac{ds}{s}.
 \]
 Now the boundedness of the semigroup together with its strong continuity and the dominated convergence theorem gives $u \in C_b\bigl( [0, \infty); D \bigr)$. 
 In particular, $u(0)=x$. 
 
 Observe that the integrand as well as the prefactor is smooth for ${t > 0}$. 
 For every such $t$ one may choose appropriate compact intervals $I$ with $t \in I \subset (0, \infty)$ and applies again dominated convergence which shows the smoothness of $u$. 
\end{proof}

\begin{remark}
  By an analogous proof \ergaenzung{one} also \ergaenzung{obtains} $U\in C^\infty\bigl((0,\infty);L(D)\bigr)$.
\end{remark}

\ergaenzung{We can now obtain a solution of \eqref{ODE} for good initial data.}

\begin{prop}
\label{prop:solutionDA}
 For $x \in \mathcal{D}(A)$ the function $u(\cdot) = U(\cdot)x$ is a solution to problem \eqref{ODE}. 
\end{prop}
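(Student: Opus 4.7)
The plan is to reduce the verification of \eqref{ODE} to a scalar PDE satisfied by the kernel
\[
v(t,r) := \left(\tfrac{t}{2}\right)^{2\alpha} r^{-\alpha-1} \exp^{-t^2/(4r)} \qquad (t,r>0),
\]
so that $U(t)x = \frac{1}{\Gamma(\alpha)} \int_0^\infty v(t,r)\, T(r) x \, dr$. Regularity of $u$ and the initial condition $u(0)=x$ are already provided by Lemma~\ref{lemma_properties}; what remains is to show $u(t) \in \mathcal{D}(A)$ and to verify the Bessel equation pointwise for $t > 0$.

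First I would compute $\partial_t v$, $\partial_t^2 v$, and $\partial_r v$ by elementary differentiation and verify the scalar identity
\[
\partial_t^2 v(t,r) + \frac{1-2\alpha}{t}\,\partial_t v(t,r) = \partial_r v(t,r) \qquad (t,r>0).
\]
This identity is the algebraic engine of the proof and is precisely what motivates the choice of the kernel $v$. Next, I would differentiate $U(\cdot)x$ under the integral sign twice in $t$. This is legitimate since on every compact subinterval of $(0,\infty)$ the integrands $\partial_t^k v(t,r) T(r) x$ are dominated by an $L^1(0,\infty)$-function in $r$: the factor $\exp^{-t^2/(4r)}$ kills the singularity at $r=0^+$, while near $r=\infty$ the polynomial factor $r^{-\alpha-1-k}$ combined with $\norm{T(r)x}\leq M\norm{x}$ yields integrability.

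To bring $A$ inside the integral, I would use that for $x \in \mathcal{D}(A)$ the semigroup $T$ leaves $\mathcal{D}(A)$ invariant and commutes with $A$ on it, so $T(r)x \in \mathcal{D}(A)$ with $AT(r)x = T(r)Ax$; hence $\int_0^\infty v(t,r) AT(r)x\,dr$ is an absolutely convergent Bochner integral bounded by $M\norm{Ax}$ times a finite scalar integral. Closedness of $A$ then gives $u(t) \in \mathcal{D}(A)$ and $Au(t) = \frac{1}{\Gamma(\alpha)}\int_0^\infty v(t,r) AT(r)x\,dr$. Since $AT(r)x = -\frac{d}{dr}T(r)x$, an integration by parts in $r$ transforms this into $\frac{1}{\Gamma(\alpha)}\int_0^\infty \partial_r v(t,r)\, T(r)x\,dr$; the boundary contributions vanish (at $r=0^+$ thanks to $\exp^{-t^2/(4r)}$, at $r=\infty$ thanks to $r^{-\alpha-1}$ together with boundedness of $T$). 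Applying the scalar identity from the first step then closes the argument:
\[
u''(t) + \frac{1-2\alpha}{t} u'(t) = \frac{1}{\Gamma(\alpha)}\int_0^\infty \partial_r v(t,r)\, T(r) x\,dr = Au(t).
\]
The main obstacle is the bookkeeping: checking that $T$ preserves $\mathcal{D}(A)$ for such $x$, that $A$ commutes with the Bochner integral via its closedness, and that all dominating functions for the interchange of limits and for the integration by parts are indeed integrable. These are routine but careful estimates that must be carried out for every step.
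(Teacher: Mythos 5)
Your proposal is correct and follows essentially the same route as the paper: differentiate under the integral, observe that the Bessel operator applied to the kernel equals its $r$-derivative (the paper writes this as $\bigl(\tfrac{t^2}{4r^2}-\tfrac{\alpha+1}{r}\bigr)f(t,r)=\tfrac{d}{dr}\bigl(\exp^{-t^2/(4r)}r^{-\alpha-1}\bigr)$), integrate by parts to land on $AT(r)x$, and pull $A$ out of the Bochner integral via Hille's theorem. The only difference is cosmetic: you traverse the chain starting from $Au(t)$ while the paper starts from $u''+\tfrac{1-2\alpha}{t}u'$.
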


\begin{proof}
 By linearity we can forget about constant prefactors. For $t> 0$ define
 \[
  g(t) := t^{2 \alpha} \int\limits_0^{\infty} f(t,r) \, dr \quad \text{with} \quad f(t,r):= \exp^{-\frac{t^2}{4r}} r^{-\alpha-1} T(r) x. 
 \]
 One calculates
 \[
  g'(t) = \frac{2\alpha}{t}g(t) + t^{2\alpha}\int\limits_0^{\infty} \left( -\frac{t}{2r} \right) f(t,r) \, dr
 \]
 and
 \[
 \begin{split}
  g''(t) =  - & \, \frac{2\alpha}{t^2}g(t) + \frac{2\alpha}{t}g'(t) + t^{2\alpha}\int\limits_0^{\infty} \left( - \frac{\alpha }{r} \right) f(t,r) \, dr \\
           + \, & t^{2\alpha}\int\limits_0^{\infty} \left( \frac{-1}{2r} \right) f(t,r) \, dr + t^{2\alpha} \int\limits_0^{\infty} \left( \frac{t^2}{4r^2} \right) f(t,r) \, dr. 
 \end{split}
 \]
 Thus, for $t>0$ we obtain
 \[
 \begin{split}
  g''(t) + \frac{1-2\alpha}{t}g'(t) & = t^{2\alpha} \int\limits_0^{\infty} \left( \frac{t^2}{4r^2} - \frac{\alpha+1}{r} \right) f(t,r) \, dr \\
                                    & = t^{2\alpha} \int\limits_0^{\infty} T(r) x \frac{d}{dr} \left( \exp^{-\frac{t^2}{4r}} r^{-\alpha-1} \right) dr \\
                                    & = t^{2\alpha} \int\limits_0^{\infty} \exp^{-\frac{t^2}{4r}} r^{-\alpha-1} A T(r) x \, dr \\
                                    & = Ag(t) 
 \end{split}
 \]
 where we used integration by parts and afterwards Hille's Theorem in the last two steps.    
\end{proof}

The last result extends to $D = \overline{\mathcal{D}(A)}$ \ergaenzung{as was already noted in \cite{gale2013} for the case $D = X$.}  

\begin{prop}
 For all $x \in D$ a solution to problem \eqref{ODE} is given by $u(\cdot) = U(\cdot)x$. 
\end{prop}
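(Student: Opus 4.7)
The natural approach is to reduce to Proposition \ref{prop:solutionDA} by approximating $x$ with elements of $\mathcal{D}(A)$ and exploiting the fact that $U(t)$, $U'(t)$ and $U''(t)$ all extend to bounded operators on $D$ for each $t>0$.

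First I would fix $x\in D$ and choose a sequence $(x_n)$ in $\mathcal{D}(A)$ with $x_n \to x$. By Proposition \ref{prop:solutionDA}, the function $u_n(\cdot) := U(\cdot)x_n$ is a solution of \eqref{ODE} with initial value $x_n$; in particular, for every $t>0$ we have $u_n(t)\in\mathcal{D}(A)$ and
\[
A u_n(t) = u_n''(t) + \frac{1-2\alpha}{t}\,u_n'(t).
\]

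Next I would invoke the remark following Lemma \ref{lemma_properties}, according to which $U\in C^\infty\bigl((0,\infty);L(D)\bigr)$. For each fixed $t>0$ this yields $U(t), U'(t), U''(t)\in L(D)$, hence $u_n(t)\to u(t)$, $u_n'(t)\to u'(t)$ and $u_n''(t)\to u''(t)$ in $D$, where $u(\cdot) := U(\cdot)x$. The right-hand side of the displayed identity therefore converges to $u''(t)+\frac{1-2\alpha}{t}u'(t)$. Since $A$ is closed (being non-negative), we conclude $u(t)\in\mathcal{D}(A)$ together with
\[
Au(t) = u''(t) + \frac{1-2\alpha}{t}\,u'(t),
\]
which is exactly the ODE at time $t$. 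The required regularity $u \in C_b\bigl([0,\infty);D\bigr) \cap C^2\bigl((0,\infty);D\bigr)$ and the initial condition $u(0)=x$ are already provided by Lemma \ref{lemma_properties}.

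The main obstacle is the operator-valued smoothness of $U$, which is asserted but not explicitly proved in the remark after Lemma \ref{lemma_properties}. If one prefers not to rely on it, one can instead verify directly from the integral representation in \eqref{defi_U} that the derivatives $U'(t)x$ and $U''(t)x$ admit estimates of the form $\norm{U^{(k)}(t)x}\leq C(t)\norm{x}$ on compact subsets of $(0,\infty)$, using the same dominated convergence argument as in the proof of Lemma \ref{lemma_properties} but with operator norms in place of vector norms; this gives the boundedness needed to pass to the limit in each term above. Once $U'(t), U''(t)\in L(D)$ is in hand, the argument is a routine closedness argument.
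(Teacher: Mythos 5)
Your proposal is correct and follows essentially the same route as the paper: approximate $x$ by a sequence in $\mathcal{D}(A)$, use Proposition \ref{prop:solutionDA} for the approximants, pass to the limit in $u_n$, $u_n'$, $u_n''$ at each fixed $t>0$, and conclude by closedness of $A$. The only cosmetic difference is that you justify the convergence of the derivatives by citing the remark $U\in C^\infty\bigl((0,\infty);L(D)\bigr)$, whereas the paper writes out the dominated-convergence estimates on the integral representation directly — which is exactly the fallback you describe.
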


\begin{proof}
  Let $x \in D$ be given. Take a sequence $\left( x_n \right)$ in $\mathcal{D}(A)$ with $x_n \rightarrow x$. For $n\in\N$ define $u_n := U(\cdot) x_n$. 
  For $t \in [0, \infty)$ one obtains 
  \[
    \norm{u_n(t) - u(t)} \leq \frac{1}{\abs{\Gamma(\alpha)}} \int\limits_0^{\infty} s^{\Re \alpha-1} \exp^{-s} M \norm{x_n - x} \, ds = \frac{M \norm{x_n - x} \Gamma(\Re \alpha)}{\abs{\Gamma(\alpha)}}. 
  \]
  So $u_n \rightarrow u$ uniformly on $[0, \infty)$. Furthermore, choose a compact interval $[a,b]$ contained in $(0, \infty)$. 
  For $t \in [a,b]$ this results in 
  \begin{align*}
    \norm{u'_n(t) - u'(t)} & \leq C_1 M \int\limits_0^{\infty} r^{- \Re \alpha} \exp^{-\frac{a^2}{4r^2}} \norm{x_n - x} \, \frac{dr}{r} \\
    & +  C_2 M \int\limits_0^{\infty} r^{- \Re \alpha-1} \exp^{-\frac{a^2}{4r^2}} \norm{x_n - x} \, \frac{dr}{r}
  \end{align*}
  with 
  \[
    C_1 : = \max\limits_{t \in [a,b]} \abs{\frac{2\alpha t^{2\alpha-1}}{4^{\alpha} \Gamma(\alpha)}},\quad 
    C_2 := \max\limits_{t \in [a,b]} \abs{ \frac{t^{2\alpha+1}}{2^{2\alpha+1} \Gamma(\alpha)} }.
  \]
  So, $u_n' \rightarrow u'$ uniformly on compact subsets $K \subseteq (0, \infty)$. Similarly one can conclude for higher derivatives. 
  By Proposition \ref{prop:solutionDA}, all $u_n$ fulfil the differential equation \eqref{ODE}. Therefore, for $t \in (0, \infty)$ we get
  \[
    u_n(t) \rightarrow u(t) \quad \text{and} \quad Au_n(t) = u_n''(t) + \frac{1-2\alpha}{t}u'_n(t) \rightarrow u''(t) + \frac{1-2\alpha}{t}u'(t).
  \]
  Since $A$ is closed this yields $u(t) \in \mathcal{D}(A)$ and 
  \[
    Au(t) = u''(t) + \frac{1-2\alpha}{t}u'(t). \qedhere
  \]
\end{proof}

\begin{remark}
  The authors conjecture that all bounded solutions $u$ of \eqref{ODE} are of the form $u( \cdot) = U( \cdot) u(0)$ whenever $-A$ generates a $C_0$-semigroup in $D$. 
  
  This is known in the case $\alpha = \tfrac{1}{2}$. Then the unique bounded solution is given by 
  \begin{equation}
    \label{case1/2}
    u(t) = \exp^{-tA^{\frac{1}{2}}} u(0) \quad(t\geq 0),
  \end{equation}
  see \cite[Theorem 6.3.2]{martinez2001}. 
  In this situation $(\exp^{-tA^{\frac{1}{2}}})_{t\geq 0}$ is a special case of a subordinated semigroup, that is if $-A$ generates a $C_0$-semigroup (in $D$) all fractional powers $-A^{\alpha}$ do as well for $\alpha \in (0,1)$ (\cite[Example 3.4.6]{haase2006}). 
  Even in the case $-A$ is not a generator of a semigroup the unique bounded solution is still given by \eqref{case1/2}.  
  For $\alpha \in (0, \tfrac{1}{2}]$ the operators $-A^{\alpha}$ are generators of holomorphic semigroups \cite[Example 3.4.7]{haase2006}. 
  
  In the Hilbert space case uniqueness results for solutions are also established in \cite{arendt2016,stinga2010}.  
  \label{uniquesolution}
\end{remark}

\ergaenzung{Let us now study the limit in \eqref{limit}. For initial data in $\mathcal{D}(A)$ this limit coincides up to constant with the fractional power of $A$, which was also proved in \cite[Theorem 2.1]{gale2013}.}

\begin{prop}
  \label{prop:DtN=fracpower}
 Let $x \in \mathcal{D}\left( A \right)$. Define $u(\cdot) := U(\cdot)x$ and set
 $
  c_{\alpha} := \frac{\Gamma(1-\alpha)}{2^{2\alpha-1} \Gamma(\alpha)}. 
 $
 Then
 \[
  \lim\limits_{t \rightarrow 0+} - t^{1-2\alpha}u'(t) = c_{\alpha} J^{\alpha}_A x = c_{\alpha} A^{\alpha}x. 
 \]
 \label{same_on_DA}
\end{prop}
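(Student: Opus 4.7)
The plan is to derive an absolutely convergent integral representation for $-t^{1-2\alpha}u'(t)$ matching one for $J^\alpha_A x$, and take the limit $t \to 0+$ by dominated convergence.

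First, I would differentiate the $r$-representation of $u(t)=U(t)x$ under the integral sign (justified since both the original kernel and its $t$-derivative decay exponentially at $r=0$ and as $r^{-\Re\alpha-2}$ at $r=\infty$) to obtain
\[
u'(t) = \frac{2\alpha}{t}u(t) - \frac{t^{2\alpha+1}}{2^{2\alpha+1}\Gamma(\alpha)}\int_0^\infty r^{-\alpha-2}\exp^{-\frac{t^2}{4r}}T(r)x\, dr.
\]
Then I would write $T(r)x = (T(r)x - x) + x$ in both this integral and in $u(t)$, and evaluate the constant-$x$ contributions via the substitution $s = t^2/(4r)$, which reduces them to the elementary identities $\int_0^\infty r^{-\alpha-1}\exp^{-\frac{t^2}{4r}}dr = 2^{2\alpha}\Gamma(\alpha)/t^{2\alpha}$ and $\int_0^\infty r^{-\alpha-2}\exp^{-\frac{t^2}{4r}}dr = 2^{2\alpha+2}\alpha\Gamma(\alpha)/t^{2\alpha+2}$. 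A short bookkeeping shows that the divergent $t^{-2\alpha}x$ terms cancel, yielding
\[
-t^{1-2\alpha}u'(t) = \frac{1}{2^{2\alpha-1}\Gamma(\alpha)}\bigl[I_1(t) + I_2(t)\bigr],
\]
with $I_1(t) := -\alpha\int_0^\infty r^{-\alpha-1}\exp^{-\frac{t^2}{4r}}(T(r)x - x)\, dr$ and $I_2(t) := \frac{t^2}{4}\int_0^\infty r^{-\alpha-2}\exp^{-\frac{t^2}{4r}}(T(r)x - x)\, dr$.

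Next, I would establish the Lipschitz-type bound $\norm{T(r)x - x}\leq M^2 r\norm{Ax}$ for $x \in \mathcal{D}(A)$. Since $Ax$ need not lie in $D$, I would proceed via the Yosida-type approximation $x_n := n(n+A)^{-1}x \in \mathcal{D}(A_D)$, for which the standard generator bound gives $\norm{T(r)x_n - x_n}\leq Mr\norm{Ax_n}\leq M^2 r\norm{Ax}$; passing to the limit $n \to \infty$ via Lemma \ref{convergence_D} yields the claim. Combined with $\norm{T(r)x - x}\leq (M+1)\norm{x}$ this makes $r \mapsto r^{-\Re\alpha-1}\norm{T(r)x - x}$ integrable on $(0,\infty)$, so dominated convergence yields $I_1(t) \to -\alpha\int_0^\infty r^{-\alpha-1}(T(r)x - x)\,dr$. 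For $I_2(t)$, splitting at $r=1$ handles the tail as $O(t^2)$, while on $(0,1]$ the substitution $s = t^2/(4r)$ produces a bound of order $t^{2-2\Re\alpha}$, vanishing in the limit since $\Re\alpha < 1$.

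Finally, I would match this to the Balakrishnan side. From $A(t+A)^{-1} = I - t(t+A)^{-1}$ and the Laplace representation $(t+A_D)^{-1}y = \int_0^\infty \exp^{-tr}T(r)y\, dr$ for $y \in D$ one derives $(t+A)^{-1}Ax = -t\int_0^\infty \exp^{-tr}(T(r)x - x)\,dr$. Substituting into \eqref{bala_integral}, interchanging with Fubini (justified by the same integrability bound), and invoking the reflection formula $\Gamma(\alpha)\Gamma(1-\alpha) = \pi/\sin(\alpha\pi)$ would yield
\[
J^\alpha_A x = -\frac{\alpha}{\Gamma(1-\alpha)}\int_0^\infty r^{-\alpha-1}(T(r)x - x)\, dr,
\]
which matches the limit of $-t^{1-2\alpha}u'(t)$ up to the factor $c_\alpha = \Gamma(1-\alpha)/(2^{2\alpha-1}\Gamma(\alpha))$. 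The equality $J^\alpha_A x = A^\alpha x$ on $\mathcal{D}(A)$ is immediate since $A^\alpha$ extends $\overline{J^\alpha_A}$. The principal delicacies are the cancellation of the divergent $t^{-2\alpha}x$ parts prior to taking the limit, and the Lipschitz bound established via Yosida approximation that avoids any a priori assumption $Ax \in D$.
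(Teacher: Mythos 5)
Your proposal is correct and follows the same overall strategy as the paper: isolate the term $\int_0^\infty r^{-\alpha-1}\exp^{-t^2/(4r)}(T(r)x-x)\,dr$, pass to the limit by dominated convergence, and show the leftover $t^2$-weighted integral is $\mathcal{O}(t^{2-2\Re\alpha})$. The differences are in the bookkeeping and in what is proved versus cited. The paper adds and subtracts the pure-$x$ integral and then combines the two remainder terms by an integration by parts, whereas you split $T(r)x=(T(r)x-x)+x$ in both integrals and verify the explicit cancellation of the divergent $t^{-2\alpha}x$ contributions; these land at the same remainder term. More substantively, the paper identifies the limit with $c_\alpha J^\alpha_A x$ by citing \cite[Proposition 3.2.1]{martinez2001}, while you rederive that moment formula from the Balakrishnan integral via the Laplace representation of $(t+A_D)^{-1}$, Fubini, and the reflection formula --- a self-contained alternative that buys independence from the reference at the cost of a page of computation (your constants check out: $\tfrac{\sin(\alpha\pi)}{\pi}\Gamma(\alpha+1)=\tfrac{\alpha}{\Gamma(1-\alpha)}=-\tfrac{1}{\Gamma(-\alpha)}$). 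Finally, you are more careful than the paper on one point: the estimate $\norm{T(r)x-x}\leq Cr\norm{Ax}$ is not immediate for $x\in\mathcal{D}(A)$ when $Ax\notin D$, since then $x\notin\mathcal{D}(A_D)$ and the standard generator bound does not apply directly; your detour through the Yosida approximants $n(n+A)^{-1}x\in\mathcal{D}(A_D)$ closes this small gap that the paper glosses over.
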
	

\begin{proof}
  A calculation yields
  \begin{align*}
    -t^{1-2\alpha} u'(t) = \frac{1}{\Gamma(\alpha) 4^{\alpha}}  \Bigl( -2\alpha \int\limits_0^{\infty} r^{-\alpha-1} \exp^{-\frac{t^2}{4r}} T(r) x \, dr 
								      + \frac{t^2}{2} \int\limits_0^{\infty} r^{-\alpha-2} \exp^{-\frac{t^2}{4r}} T(r) x \, dr \Bigr).
  \end{align*} 
  As for the prefactors one observes
  \[
    \frac{-2\alpha}{\Gamma(\alpha)4^{\alpha}} = \frac{\Gamma(1-\alpha)}{2^{2\alpha-1} \Gamma(\alpha) \Gamma(-\alpha)}. 
  \]
  Using this and adding a zero we get
  \begin{align*}
    -t^{1-2\alpha} u'(t) = c_{\alpha} & \left( \frac{1}{\Gamma(-\alpha)} \int\limits_0^{\infty} r^{-\alpha-1} \exp^{-\frac{t^2}{4r}} \left( T(r) x -x \right) dr  \right. \\
				      & - \frac{t^2}{4\alpha \Gamma(-\alpha)} \int\limits_0^{\infty} r^{-\alpha-2} \exp^{-\frac{t^2}{4r}} T(r) x \,dr \\ 
				      & \left. + \frac{1}{\Gamma(-\alpha)} \int\limits_0^{\infty} r^{-\alpha-1} \exp^{-\frac{t^2}{4r}} x \, dr \right). 
  \end{align*} 

  For $x \in \mathcal{D}(A)$ the first integral in the sum exists even for $t = 0$ and yields the desired result 
  \[\lim\limits_{t \rightarrow 0+} - t^{1-2\alpha}u'(t) = c_{\alpha} \frac{1}{\Gamma(-\alpha)} \int\limits_0^{\infty} r^{-\alpha-1} \left( T(r) x -x \right) dr = c_\alpha J^\alpha_A x\]
  by dominated convergence and \cite[Proposition 3.2.1]{martinez2001}.

  So we are left to prove that the sum of the last two integrals converges to $0$ for $t \rightarrow 0+$. 
  Integration by parts gives
  \begin{align*}
    & - \frac{t^2}{4\alpha \Gamma(-\alpha)} \int\limits_0^{\infty} r^{-\alpha-2} \exp^{-\frac{t^2}{4r}} T(r) x \, dr \; + \; 
    \frac{1}{\Gamma(-\alpha)} \int\limits_0^{\infty} r^{-\alpha-1} \exp^{-\frac{t^2}{4r}} x \, dr \\
  = \, &  \frac{t^2}{4\alpha \Gamma(-\alpha)} \int\limits_0^{\infty} r^{-\alpha-2} \exp^{-\frac{t^2}{4r}} \big( x - T(r) x \big) dr. 
  \end{align*}
  Taking the norm of the last expression and using the estimate $\norm{T(r)x - x} \leq \norm{Ax} r$ we obtain
  \[
  \norm{\frac{t^2}{4\alpha \Gamma(-\alpha)} \int\limits_0^{\infty} r^{-\alpha-2} \exp^{-\frac{t^2}{4r}}\big( x - T(r) x \big) dr} 
  \leq \frac{t^{2-2\Re{\alpha}} \norm{Ax}}{4^{1 - \Re \alpha} \abs{\alpha} \abs{\Gamma(-\alpha)}} \int\limits_0^{\infty} s^{\Re{\alpha}-1} \exp^{-s} \, ds,
  \]
  which tends to zero as $t\to 0+$.
\end{proof}


We define the operator $T_{\alpha}$ in $X$ by  
\begin{align*}
  \mathcal{D}\left( T_{\alpha} \right) & := \left\{ x \in D \, \big|  \lim\limits_{t \rightarrow 0+} - t^{1-2\alpha}U'(t)x \text{ exists} \right\}, \\
  T_{\alpha} x &: = \lim\limits_{t \rightarrow 0+} - t^{1-2\alpha}U'(t)x.
\end{align*}

\begin{lemma}
\label{lem:T_alpha_closable}
 The operator $T_{\alpha}$ is closable. 
\end{lemma}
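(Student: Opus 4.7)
The plan is to prove closability by showing that if $(x_n) \subseteq \mathcal{D}(T_\alpha)$ with $x_n \to 0$ and $T_\alpha x_n \to y$, then $y=0$. The strategy is to reduce to the already-known closability of $J_A^\alpha$ (Theorem 3.1.8 in \cite{martinez2001}) by smoothing the $x_n$ with resolvents $(k+A)^{-1}$, and then to use Lemma \ref{convergence_D} to remove the smoothing.

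First I would verify the key commutation identity $(k+A)^{-1} U(t) = U(t) (k+A)^{-1}$ on $D$, for every $k>0$ and $t \ge 0$. This is immediate from formula \eqref{defi_U} and the fact that $(k+A)^{-1}$ commutes with the semigroup $(T(r))_{r\ge 0}$ and may be pulled through the Bochner integral. Differentiating in $t$ and using the continuity of $(k+A)^{-1}$ gives $(k+A)^{-1} \bigl(-t^{1-2\alpha} U'(t) x\bigr) = -t^{1-2\alpha} U'(t) (k+A)^{-1}x$, so that for every $x \in \mathcal{D}(T_\alpha)$ one has $(k+A)^{-1}x \in \mathcal{D}(T_\alpha)$ and
\[
  T_\alpha (k+A)^{-1} x = (k+A)^{-1} T_\alpha x.
\]

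Next, for $x \in D$ the element $(k+A)^{-1}x$ lies in $\mathcal{D}(A_D) \subseteq \mathcal{D}(A)$, so Proposition \ref{prop:DtN=fracpower} applies and gives $T_\alpha (k+A)^{-1} x = c_\alpha J_A^\alpha (k+A)^{-1} x$. Applying this to the sequence $(x_n)$ from the closability hypothesis yields that $(k+A)^{-1}x_n \in \mathcal{D}(A)$ with $(k+A)^{-1} x_n \to 0$ and
\[
  c_\alpha J_A^\alpha (k+A)^{-1} x_n = (k+A)^{-1} T_\alpha x_n \longrightarrow (k+A)^{-1} y.
\]
Since $J_A^\alpha$ is closable, the left-hand side must converge to $0$, i.e.\ $(k+A)^{-1} y = 0$ for every $k>0$.

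To conclude $y=0$ I would observe that $y$ lies in $D$: indeed, $U(t)$ maps $D$ into $D$ (by \eqref{defi_U} and the closedness of $D$), so the same holds for $U'(t)$, hence $-t^{1-2\alpha}U'(t)x_n \in D$ for every $t>0$, and taking the limit yields $T_\alpha x_n \in D$ and thus $y \in D$. By Lemma \ref{convergence_D}, $k(k+A)^{-1} y \to y$ as $k\to\infty$, but $k(k+A)^{-1}y = 0$ for every $k$, so $y=0$, as desired. The only delicate point is justifying the interchange of $(k+A)^{-1}$ with the limit $t\to 0+$ defining $T_\alpha$; this is however routine once the commutation $(k+A)^{-1} U'(t) = U'(t)(k+A)^{-1}$ and the boundedness of $(k+A)^{-1}$ are in place.
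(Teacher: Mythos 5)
Your proof is correct and follows essentially the same route as the paper's: commute the resolvent $(\lambda+A)^{-1}$ through $U'(t)$ and the limit $t\to 0+$, apply Proposition \ref{prop:DtN=fracpower} to $(\lambda+A)^{-1}x_n\in\mathcal{D}(A)$, and deduce $(\lambda+A)^{-1}y=0$. The only immaterial differences are that the paper justifies the final limit via the boundedness of $J^{\alpha}_A(\lambda+A)^{-1}$ where you use the closability of $J^{\alpha}_A$, and that $y=0$ follows at once from the injectivity of $(\lambda+A)^{-1}$ without the detour through Lemma \ref{convergence_D}.
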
	

\begin{proof}
 The proof follows along the lines of proving closability of $J^{\alpha}_A$, see  \cite[Theorem 3.1.8]{martinez2001}. 
 The crucial ingredient is the fact that one has $A^{\alpha} (\lambda + A)^{-1} \in L(X)$ for any $\lambda > 0$ (\cite[Theorem 3.1.8]{martinez2001}). 
 So, let $\left( x_n \right)$ in $\mathcal{D}\left(T_\alpha \right)$, $x_n \rightarrow 0$, $T_{\alpha} x_n \rightarrow y$.  
 Then, for $n\in\N$ we have $(\lambda+A)^{-1}x_n\in\mathcal{D}(A)$ and by Proposition \ref{prop:DtN=fracpower} we obtain
 \[\lim\limits_{t \rightarrow 0+} -t^{1 - 2 \alpha} U'(t) ( \lambda + A)^{-1} x_n = c_{\alpha} J^{\alpha}_A (\lambda + A)^{-1} x_n.\]
 Thus
 \begin{align*}
  (\lambda + A)^{-1} y & = \lim\limits_{n \rightarrow \infty} \lim\limits_{t \rightarrow 0+} -t^{1 - 2 \alpha} ( \lambda + A)^{-1} U'(t)x_n \\
  & = \lim\limits_{n \rightarrow \infty} \lim\limits_{t \rightarrow 0+} -t^{1 - 2 \alpha} U'(t) ( \lambda + A)^{-1} x_n
  =  \lim\limits_{n \rightarrow \infty} c_{\alpha} J^{\alpha}_A (\lambda + A)^{-1} x_n = 0.
 \end{align*}
 Hence, $y=0$.
\end{proof}

\ergaenzung{Lemma \ref{lem:T_alpha_closable} gives rise to the following definition.}

\begin{defi}
  We call $\overline{T_\alpha}$ the \emph{(generalised) Dirichlet-to-Neumann operator}.
\end{defi}

\begin{thm}
\label{maintheorem}
  We have $c_{\alpha} \overline{J^{\alpha}_A} = \overline{T_{\alpha}}$.
\end{thm}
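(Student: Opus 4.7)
The plan is to prove both inclusions $c_\alpha \overline{J^\alpha_A} \subseteq \overline{T_\alpha}$ and $\overline{T_\alpha} \subseteq c_\alpha \overline{J^\alpha_A}$.

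The first inclusion is immediate from Proposition \ref{prop:DtN=fracpower}: on $\mathcal{D}(A) = \mathcal{D}(J^\alpha_A)$ we have $T_\alpha = c_\alpha J^\alpha_A$, so $c_\alpha J^\alpha_A \subseteq T_\alpha \subseteq \overline{T_\alpha}$, and since $\overline{T_\alpha}$ is closed by Lemma \ref{lem:T_alpha_closable}, passing to the closure yields $c_\alpha \overline{J^\alpha_A} \subseteq \overline{T_\alpha}$.

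For the reverse inclusion it suffices to show $T_\alpha \subseteq c_\alpha \overline{J^\alpha_A}$, since the right-hand side is closed. Let $x \in \mathcal{D}(T_\alpha)$ and set $y := T_\alpha x$. The approximation I would employ is $x_n := n(n+A)^{-1}x \in \mathcal{D}(A)$. By Lemma \ref{convergence_D}, since $x \in D$, we have $x_n \to x$. Because $(n+A)^{-1}$ commutes with the semigroup $T(r)$, one verifies from the integral formula \eqref{defi_U} that $(n+A)^{-1}$ commutes with both $U(t)$ and $U'(t)$; hence
\[
-t^{1-2\alpha} U'(t) x_n = n(n+A)^{-1}\bigl(-t^{1-2\alpha} U'(t) x\bigr) \xrightarrow{t\to 0+} n(n+A)^{-1} y,
\]
because $n(n+A)^{-1}$ is bounded. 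This shows $x_n \in \mathcal{D}(T_\alpha)$ with $T_\alpha x_n = n(n+A)^{-1} y$, and since $x_n \in \mathcal{D}(A)$, Proposition \ref{prop:DtN=fracpower} gives $c_\alpha J^\alpha_A x_n = n(n+A)^{-1} y$.

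It remains to pass to the limit in $n$. Note that $y \in D$: indeed, for $t>0$ the function $U'(t)x$ lies in $D$ (as $U(t)$ maps into $D$), so $y$ is a limit of elements of the closed subspace $D$. Applying Lemma \ref{convergence_D} once more, $n(n+A)^{-1} y \to y$. Therefore $x_n \to x$ in $X$ and $c_\alpha J^\alpha_A x_n \to y$, which by definition of the closure gives $x \in \mathcal{D}(\overline{J^\alpha_A})$ with $c_\alpha \overline{J^\alpha_A} x = y = T_\alpha x$, completing the proof. The only delicate point I foresee is justifying the commutation of $(n+A)^{-1}$ with $U'(t)$, which reduces to moving a bounded operator under the Bochner integral and under the $t$-derivative — routine, but it is the main technical step.
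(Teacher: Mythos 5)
Your proposal is correct and follows essentially the same route as the paper's proof: both rely on commuting the resolvent $(\lambda+A)^{-1}$ past $U'(t)$ to get $T_\alpha(\lambda+A)^{-1}x = c_\alpha J^\alpha_A(\lambda+A)^{-1}x$, then use Lemma \ref{convergence_D} and the closability of $J^\alpha_A$; the only cosmetic difference is that you reduce to showing $T_\alpha \subseteq c_\alpha\overline{J^\alpha_A}$ while the paper runs the approximation directly on a sequence for a general element of $\mathcal{D}(\overline{T_\alpha})$.
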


\begin{proof}
We already know $ c_{\alpha} J^{\alpha}_A = T_{\alpha}\big|_{\mathcal{D}(A)} \subseteq T_{\alpha}$. 	
Hence, $ c_{\alpha}\overline{J^{\alpha}_A} \subseteq \overline{T_{\alpha}} $ follows. 

For the other inclusion consider first $x \in \mathcal{D}(T_{\alpha})$ and $\lambda >0$. 
Then
\begin{align*}
 \left( \lambda + A \right)^{-1} T_{\alpha}x & = \lim\limits_{t \rightarrow 0+} -t^{1 - 2 \alpha} (\lambda + A)^{-1} U'(t)x \\
 & = T_{\alpha} \left( \lambda + A \right)^{-1} x = c_\alpha J^\alpha_A \left( \lambda + A \right)^{-1} x.
\end{align*}
Now take $x \in \mathcal{D}\left( \overline{T_{\alpha}} \right)$ and a sequence $(x_n)$ in $\mathcal{D}(T_{\alpha})$ with $x_n \rightarrow x$ such that $T_{\alpha} x_n \rightarrow \overline{T_{\alpha}} x$. 
Observe that we have $\left(T_{\alpha} x_n \right)$ in $D$ and apply Lemma \ref{convergence_D}
together with the preliminary result which yields 
\[
 \lim\limits_{n \rightarrow \infty} n(n + A)^{-1} T_{\alpha} x_n = \overline{T_{\alpha}} x = \lim\limits_{n \rightarrow \infty} c_{\alpha} J_A^{\alpha} \left( n(n + A)^{-1} x_n \right).
\]
The sequence $\left( n(n + A)^{-1} x_n \right)$ is contained in $\mathcal{D}(A) = \mathcal{D}(J^{\alpha}_A)$ and converges to $x$. By the closability of $J^{\alpha}_A$ we get $x \in \mathcal{D}\left( \overline{J^{\alpha}_A} \right)$ and 
\[
 \overline{T_{\alpha}} x =  c_{\alpha} \overline{J^{\alpha}_A} x. \qedhere
\]  

\end{proof}

\begin{remark}
  \textrm{(a)}
	If $\alpha = \tfrac{1}{2}$ we have $T_{1/2} = \overline{T_{1/2}}$ by Remark \ref{uniquesolution}, i.e.\ $T_{1/2}$ is closed.
	This follows from the fact that if $x \in \mathcal{D} \bigl( A^{\frac{1}{2}} \bigr)$ we have 
	\[
	 \lim\limits_{t \rightarrow 0+} - U'(t)x = \lim\limits_{t \rightarrow 0+} A^{\frac{1}{2}} \exp^{-tA^{\frac{1}{2}}} x = A^{\frac{1}{2}}x. 
	\]
	Thus, $x \in \mathcal{D} \left( T_{1/2} \right)$ and $T_{1/2} x = A^{\frac{1}{2}} x $. 
	
  \textrm{(b)}
    We conjecture that $T_\alpha$ is always closed.
\end{remark}

Combining Theorem \ref{maintheorem} with Proposition \ref{bala_power} we obtain the following.

\begin{corollary}
 Let $A$ be a non-negative operator in $X$, $D:=\overline{\mathcal{D}(A)}$. Let $-A_D$ generate a bounded strongly continuous semigroup on 
 $D$, $\alpha \in \C$ with ${0< \Re \alpha <1}$ and denote by $T_{\alpha}$ the associated Dirichlet-to-Neumann operator. 
 Then we have ${\overline{T_{\alpha}}  = c_{\alpha} \overline{J^{\alpha}_A} \subseteq c_{\alpha} A^{\alpha}}$ and 
 $\overline{T_{\alpha}} = c_{\alpha} A^{\alpha}$ if and only if $A$ is densely defined.  
\end{corollary}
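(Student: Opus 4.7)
The plan is to derive the corollary as a direct consequence of Theorem \ref{maintheorem} and Proposition \ref{bala_power}, with essentially no new computation. First I would invoke Theorem \ref{maintheorem} to obtain the equality $\overline{T_\alpha} = c_\alpha \overline{J^\alpha_A}$, which already delivers the first identity in the stated chain.

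For the inclusion $c_\alpha \overline{J^\alpha_A} \subseteq c_\alpha A^\alpha$, I would apply Proposition \ref{bala_power}(i), which identifies $\overline{J^\alpha_A} = (A^\alpha)_D$. By the definition of the part of an operator in the closed subspace $D = \overline{\mathcal{D}(A)}$, the operator $(A^\alpha)_D$ is simply a restriction of $A^\alpha$ to those $x \in \mathcal{D}(A^\alpha) \cap D$ with $A^\alpha x \in D$. Hence $\overline{J^\alpha_A} \subseteq A^\alpha$, and since $c_\alpha$ is a nonzero scalar, multiplication by $c_\alpha$ preserves this inclusion and yields the required containment.

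For the equivalence characterising equality I would appeal to Proposition \ref{bala_power}(ii), which asserts that $\overline{J^\alpha_A} = A^\alpha$ holds if and only if $D = X$, i.e.\ if and only if $A$ is densely defined. Because $c_\alpha \neq 0$, this is equivalent to $c_\alpha \overline{J^\alpha_A} = c_\alpha A^\alpha$, and combined with Theorem \ref{maintheorem} it translates into $\overline{T_\alpha} = c_\alpha A^\alpha$ precisely in the densely defined case.

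I do not anticipate any substantive obstacle: the corollary is purely a synthesis of two results already proved in the paper. The only minor point to notice is that scaling an (unbounded) operator by a nonzero complex constant preserves closedness, inclusions and equalities of operators, which is routine and needs no further comment.
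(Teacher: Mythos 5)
Your proposal is correct and matches the paper's approach exactly: the corollary is obtained by combining Theorem \ref{maintheorem} with Proposition \ref{bala_power} (parts i) and ii)), which is precisely what the paper indicates without giving further details. Nothing is missing.
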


\section{Example}

We shall demonstrate the above considerations at the rather easy example of a multiplication operator on $C_b(\Omega)$ for some open set $\Omega \subseteq \R^n$. 

Let $f \in C(\Omega)$ with $\overline{\mathcal{R}(f)} \subseteq S_{\theta}$ for some $\theta \in \left[0, \tfrac{\pi}{2} \right]$ and define 
\[
 \mathcal{D} \left( A \right) := \{g \in C_b(\Omega)  \mid fg \in C_b(\Omega) \}, \quad Ag := fg. 
\]  
The operator $A$ is bounded if and only if $f$ is bounded, it is closed and in general not densely defined. 
It is non-negative and $-A$ generates the semigroup $\left(T(t) \right)_{t \geq 0}$ given by
\[
 \left( T(t) g \right) (x) = \text{e}^{-tf(x)} g(x) \qquad \big( x\in\Omega, t\geq 0, g\in C_b(\Omega) \big), 
\]
which is strongly continuous on $\overline{\mathcal{D}(A)}$ and analytic for $\theta < \tfrac{\pi}{2}$. 
We claim 
\[
 \mathcal{D} \left( A^{\alpha} \right) = \{ g \in C_b(\Omega) \mid f^{\alpha} g \in C_b(\Omega) \}. 
\]
So let $g \in \mathcal{D} \left( A^{\alpha} \right)$. Then
\begin{equation}
 \left( A^{\alpha} g \right)(x) = \lim\limits_{\varepsilon \rightarrow 0+}  \left( \left( J^{\alpha}_{\left( A + \varepsilon \right)^{-1}} \right)^{-1} g \right) (x) \label{maxdomain}
\end{equation}
exists. One evaluates this expression using formula \eqref{ideabala} and the equality
\[
 z^{-\alpha} = \left( z^{\alpha} \right)^{-1} = \left( z^{-1} \right)^{\alpha} =  \frac{\sin\left( \alpha \pi \right)}{\pi} \int\limits_{0}^{\infty} t^{- \alpha } \frac{z}{t + z} \, dt. 
\]
The result is 
\[
 \left( A^{\alpha} g \right) (x) = f(x)^{\alpha} g(x) \quad(x\in\Omega),
\]
and thus $f^{\alpha} g \in C_b (\Omega)$. 

Conversely we make use of formula \eqref{ideabala} to estimate the term $\abs{\left(f(x) + \varepsilon \right)^{\alpha} - f(x)^{\alpha}}$ for $\varepsilon > 0$ given.  
Denote 
\[
 M := \sup\limits_{x \in \R^n, \, t>0} \abs{\frac{t}{f(x) + t} }
\]
which is the non-negativity constant of $A$. Then
\[
\sup\limits_{x \in \R^n, \, t>0} \abs{\frac{f(x)}{f(x) + t}} = \sup\limits_{x \in \R^n, \, t>0} \abs{-\frac{t}{f(x) + t} + 1}\leq M+1.
\]
Now we can estimate
\[
 \int\limits_0^{\varepsilon} t^{\Re \alpha-1} \abs{\frac{f(x)+\varepsilon}{f(x)+\varepsilon + t} - \frac{f(x)}{f(x) + t}} \, dt \leq 2(M+1) \frac{\varepsilon^{\Re \alpha}}{\Re \alpha}
\]
and
\begin{align*}
 & \int\limits_{\varepsilon}^{\infty} t^{\Re \alpha-1} \abs{\frac{f(x)+\varepsilon}{f(x)+\varepsilon + t} - \frac{f(x)}{f(x) + t}} \, dt \\
 \leq \; &  \int\limits_{\varepsilon}^{\infty} t^{\Re \alpha} \abs{\frac{\varepsilon}{\left( f(x)+\varepsilon + t \right) \left( f(x) + t \right)} } \, dt \\ 
 \leq \; & M^2 \frac{\varepsilon^{\Re \alpha}}{1 - \Re \alpha}. 
\end{align*}
Thus, there is a positive constant $C$ such that 
\[
 \norm{ \left( f + \varepsilon \right)^{\alpha} - f^{\alpha}} \leq C \varepsilon ^{\Re \alpha}. 
\]
In particular, $\left( f + \varepsilon \right)^{\alpha} - f^{\alpha} \in C_b(\Omega)$ and therefore also $\left( f + \varepsilon \right)^{\alpha} g \in C_b(\Omega)$ since $f^{\alpha} g \in C_b(\Omega)$ by assumption. Furthermore $\left( f + \varepsilon \right)^{\alpha} g \rightarrow f^{\alpha} g$ which implies $g \in \mathcal{D} \left( A^{\alpha} \right)$. 

\begin{remark}
  The presented proof resembles the abstract fact that $\left( A + \varepsilon \right)^{\alpha} - A^{\alpha}$ can be extended to a bounded operator in $D$, see also \cite[Proposition 5.1.14]{martinez2001}. 
\end{remark}

Applying Proposition \ref{bala_power} we obtain $ \mathcal{D} \left( \overline{J^{\alpha}_A} \right) = \{ g \in \overline{\mathcal{D}(A)} \, \big| \, f^{\alpha} g \in \overline{\mathcal{D}(A)} \}. $

Furthermore, for $g \in C_b(\Omega)$, $t > 0$, $x \in \Omega$ we can evaluate the expression $\left(U(t) g \right)(x)$ from (\ref{defi_U}) explicitly using \cite[10.32.10]{nist2016}, which results in

\begin{align*}
 u(t,x) = \big( U(t)g \big)(x) = & \;  \frac{1}{\Gamma(\alpha)} \left( \frac{t}{2} \right)^{2\alpha} \int\limits_0^\infty r^{-\alpha} \exp^{-\frac{t^2}{4r}} e^{-rf(x)} g(x) \, \frac{dr}{r} \\
 = & \; \frac{2 g(x)}{\Gamma(\alpha)} \left( \frac{t f(x)^{\frac{1}{2}}}{2} \right)^{\alpha} K_{\alpha}\left( t f(x)^{\frac{1}{2}} \right) 
\end{align*}
with the modified Bessel function $K_{\alpha}$ (see \cite[10.25.2 and 10.27.4]{nist2016} for a definition).  
By direct calculation one verifies now that
\[
 u(t,x) =  \frac{2 g(x)}{\Gamma(\alpha)} \Bigl( \frac{t f(x)^{\frac{1}{2}}}{2} \Bigr)^{\alpha} \Bigl( \frac{\pi}{2 \sin (\alpha \pi)} \frac{2^{\alpha} t^{-\alpha} f(x)^{-\frac{\alpha}{2}}}{\Gamma(1 - \alpha)} + \mathcal{O}(t^{\alpha}) \Bigr) \rightarrow g(x)\quad(t\to 0+). 
\]
A second calculation based on the same ideas as before shows
\begin{equation*}
\begin{aligned}
 \frac{\partial_t u(t,x)}{t^{2\alpha - 1} } & = \frac{f(x)^{\frac{\alpha}{2}} g(x)}{\Gamma(\alpha) 2^{\alpha - 1}} 
 \Biggl(  \frac{\alpha \pi }{2 \sin(\alpha \pi)} \Bigl( \frac{\bigl(t^{2} f(x)^{\frac{1}{2}} \bigr)^{-\alpha} }{2^{-\alpha} \Gamma(1-\alpha)} + \frac{t^{1-2\alpha} f(x)^{\frac{1-\alpha}{2}}}{2^{1-\alpha} \Gamma(2-\alpha)}  - \frac{ f(x)^{\frac{\alpha}{2}}}{2^{\alpha} \Gamma(1+\alpha)} \Bigr) \\ 
 & \hspace{2,3cm} + \frac{\pi }{2 \sin(\alpha \pi)} \Bigl(   \frac{\bigl(t^{2} f(x)^{\frac{1}{2}} \bigr)^{-\alpha} }{2^{-\alpha} \Gamma(-\alpha)} + \frac{t^{1-2\alpha} f(x)^{\frac{1-\alpha}{2}}}{2^{1-\alpha} \Gamma(1-\alpha)}  - \frac{ f(x)^{\frac{\alpha}{2}}}{2^{\alpha} \Gamma(\alpha)}  \Bigr) \\ 
 & \hspace{2,3cm} +  \mathcal{O} \bigl(t^{2-2\alpha} \bigr) \Biggr) \\
 & = f(x)^{\frac{\alpha}{2}} g(x) \Bigl( \frac{\Gamma(1-\alpha)}{2^{\alpha}} \Bigr) \Bigl( -\frac{2 f(x)^{\frac{\alpha}{2}}}{2^{\alpha} \Gamma(\alpha)}  + \mathcal{O} \bigl( t^{2-2\alpha} \bigr) \Bigr). 
 \end{aligned}
 \end{equation*}
From this, and as expected, we derive
\[
 \lim\limits_{t \rightarrow 0+} - t^{1-2\alpha} \partial_t u(t,x) = c_{\alpha} f(x)^{\alpha} g(x). 
\]
The calculated limits are just pointwise limits though. In order to check uniform convergence one would have to use properties of $K_{\alpha}$ different from its power series representation.  

\bibliographystyle{plainnat}

\bibliography{paper_dtn}

\begin{thebibliography}{13}
\providecommand{\natexlab}[1]{#1}
\providecommand{\url}[1]{\texttt{#1}}
\expandafter\ifx\csname urlstyle\endcsname\relax
  \providecommand{\doi}[1]{doi: #1}\else
  \providecommand{\doi}{doi: \begingroup \urlstyle{rm}\Url}\fi

\bibitem[nis()]{nist2016}
{NIST Digital Library of Mathematical Functions}.
\newblock Version 1.0.14, visited 10.03.2017 \url{http://dlmf.nist.gov/}.

\bibitem[Arendt et~al.()Arendt, ter Elst, and Warma]{arendt2016}
W.~Arendt, A.~F.~M. ter Elst, and M.~Warma.
\newblock {Fractional powers of sectorial operators via the
  Dirichlet-to-Neumann operator}.
\newblock arxiv preprint \url{https://arxiv.org/abs/1608.05707}.

\bibitem[Balakrishnan(1959)]{balakrishnan1959}
A.~V. Balakrishnan.
\newblock {An operational calculus for infinitesimal generators of semigroups}.
\newblock \emph{Trans. Amer. Math. Soc.}, 91:\penalty0 330--353, 1959.

\bibitem[Balakrishnan(1960)]{balakrishnan1960}
A.~V. Balakrishnan.
\newblock Fractional powers of closed operators and the semigroups generated by
  them.
\newblock \emph{Pacific J. Math.}, 10\penalty0 (2):\penalty0 419--437, 1960.

\bibitem[Bochner(1949)]{bochner1949}
S.~Bochner.
\newblock {Diffusion Equation and Stochastic Processes}.
\newblock \emph{Proc. Nat. Acad. Sciences}, 35:\penalty0 368--370, 1949.

\bibitem[Caffarelli and Silvestre(2007)]{caffarelli2007}
L.~Caffarelli and L.~Silvestre.
\newblock An extension problem related to the fractional laplacian.
\newblock \emph{Comm. Partial Differential Equations}, 32\penalty0
  (8):\penalty0 1245--1260, 2007.

\bibitem[Gal{\'e} et~al.(2013)Gal{\'e}, Miana, and Stinga]{gale2013}
J.~E. Gal{\'e}, P.~J. Miana, and P.~R. Stinga.
\newblock Extension problem and fractional operators: semigroups and wave
  equations.
\newblock \emph{Journal of Evolution Equations}, 13\penalty0 (2):\penalty0
  343--368, 2013.

\bibitem[Haase(2006)]{haase2006}
M.~Haase.
\newblock \emph{The Functional Calculus for Sectorial Operators}.
\newblock Operator Theory: Advances and Applications. Birkh{\"a}user Basel,
  2006.

\bibitem[Lunardi(1995)]{lunardi1995}
A.~Lunardi.
\newblock \emph{Analytic Semigroups and Optimal Regularity in Parabolic
  Problems}.
\newblock Modern Birkh{\"a}user Classics. Springer Basel, 1995.

\bibitem[Martinez and Sanz(2001)]{martinez2001}
C.~Martinez and M.~Sanz.
\newblock \emph{The Theory of Fractional Powers of Operators}.
\newblock North-Holland Mathematics Studies. Elsevier Science, 2001.

\bibitem[Molchanow and Ostrovskii(1968)]{molchanow1968}
S.~A. Molchanow and E.~Ostrovskii.
\newblock Symmetric stable processes as traces of degenerate diffusion
  processes.
\newblock \emph{Theory Probab. Appl.}, 14\penalty0 (1):\penalty0 128--131,
  1968.

\bibitem[Phillips(1952)]{phillips1952}
R.~S. Phillips.
\newblock {On the generation of semigroups of linear operators}.
\newblock \emph{Pacific J. Math.}, 2:\penalty0 343--369, 1952.

\bibitem[Stinga and Torrea(2010)]{stinga2010}
P.~R. Stinga and J.~L. Torrea.
\newblock {Extension Problem and Harnack's Inequality for Some Fractional
  Operators}.
\newblock \emph{Comm. Partial Differential Equations}, 35\penalty0
  (11):\penalty0 2092--2122, 2010.

\end{thebibliography}

\end{document}